\def\ZZ{\mathbb{Z}}
\def\UU{\boldsymbol{\mathcal{E}}}
\def\T{\mathbb{T}}
\def\can{\mathbf{can}}
\def\tE{\widetilde{\UU}}
\def\utE{\underline{\tE}}
\def\u{\mathfrak{u}}
\def \ot{\otimes}
\def\<{\langle}
\def\>{\rangle}
\newtheorem{thm}{\bf{Theorem}}[section]
\newtheorem{lemma}[thm]{Lemma}
\theoremstyle{remark}
\theoremstyle{definition}
\numberwithin{equation}{section}
\title[Addendum to Drinfeld Realization of the Elliptic Hall algebra]{Addendum to Olivier Schiffmann, ``Drinfeld Realization of the Elliptic Hall Algebra''}
\author{Dragos Fratila}
\address{Universit\'e Paris Denis-Diderot - Paris 7, Institut de Mathematiques de Jussieu, UMR 7586 du CNRS, Batiment Chevaleret, 75205 Paris Cedex 13, France}
\email{fratila@math.jussieu.fr}
\begin{document}
\maketitle

\begin{abstract}In~\cite{S} O. Schiffmann gave a presentation of the Drinfel'd double of the elliptic Hall algebra which is similar in spirit to Drinfel'd's new realization of quantum affine algebras. Using this result together with a part of his proof we can provide such a description for the elliptic Hall algebra.
\end{abstract}

\vspace{0.1in}

We will use freely all the notations and the results of~\cite{S}.


Let $\utE^+$ be the algebra generated by the Fourier coefficients of the series $\mathbb{T}_{1}(z)$ and $\mathbb{T}_0^+(z)$ subject only to the relevant positive relations (4.1), (4.2), (4.3), (4.5) in~\cite{S}. To avoid any confusion with the generators of $\tE$ we denote the generators of $\utE^+$ by $\u_{1,d},d\in\ZZ$ and $\Theta_{0,d},d\ge 1$. 

We denote by $\tE^\pm$ the \textbf{subalgebra} of $\tE$ generated by the positive (resp. negative) generators. Similarly for $\UU^\pm$.
Our goal is to prove that $\UU^+$ is isomorphic to $\utE^+$. The strategy is to go through their Drinfel'd doubles. But first we need to define a coalgebra structure on $\utE^+$. 

\begin{lemma}\label{bialgebra} The map $\Delta:\utE^+\to\utE^+ \widehat\ot\, \utE^+$ given on generators by
\[
 \Delta(\T_0^+(z)) = \T_0^+(z)\ot\T_0^+(z)
\]
\[
 \Delta(\T_1(z)) = \T_1(z)\ot 1+\T_0^+(z)\ot \T_1(z)
\]
is a well defined algebra map and makes $\utE^+$ into a (topological) bialgebra.
\end{lemma}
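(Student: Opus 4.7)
The plan is to invoke the universal property of the presentation: since $\utE^+$ is defined as the algebra generated by the Fourier coefficients of $\T_1(z)$ and $\T_0^+(z)$ modulo the positive relations (4.1), (4.2), (4.3), (4.5) of~\cite{S}, the formulas given on generators extend to a well-defined algebra map $\Delta \colon \utE^+ \to \utE^+ \widehat\ot\, \utE^+$ if and only if the elements $X(z) := \Delta(\T_1(z))$ and $Y(z) := \Delta(\T_0^+(z))$ in the topological tensor product satisfy those same relations. Note that $Y(z)$ is well-defined because each coefficient of $z^{-n}$ in $\T_0^+(z)\ot \T_0^+(z)$ is a finite sum, and similarly for $X(z)$. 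Once the algebra map is in place, coassociativity and the existence of a counit (sending $\T_0^+(z)$ to $1$ and $\T_1(z)$ to $0$) are checked on generators and are immediate from the formulas; for example, both $(\Delta \ot \id)\Delta(\T_1(z))$ and $(\id \ot \Delta)\Delta(\T_1(z))$ reduce to $\T_1(z)\ot 1\ot 1 + \T_0^+(z)\ot \T_1(z)\ot 1 + \T_0^+(z)\ot \T_0^+(z)\ot \T_1(z)$.

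To verify the relations for $X(z)$ and $Y(z)$ I would proceed term by term. Relation (4.1), the commutativity among the Fourier coefficients of $\T_0^+(z)$, is preserved because $\Delta(\T_0^+(z))$ is group-like and commutativity in each tensor slot is inherited from (4.1) in $\utE^+$. For the mixed exchange relation (4.2) between $\T_0^+$ and $\T_1$, one expands
\[
\Delta(\T_0^+(z))\Delta(\T_1(w)) = \T_0^+(z)\T_1(w)\ot \T_0^+(z) + \T_0^+(z)\T_0^+(w)\ot \T_0^+(z)\T_1(w)
\]
and the analogous expression for $\Delta(\T_1(w))\Delta(\T_0^+(z))$, then matches them slot by slot using (4.2) together with the commutativity of the $\T_0^+$'s. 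The quadratic relation (4.3) governing $\T_1(z)\T_1(w)$ and the cubic Serre-type relation (4.5) are handled by the same mechanism: diagonal terms are resolved by a single use of the relation in one tensor factor, while the cross terms of the form $\T_1(\cdot)\ot \T_0^+(\cdot)$ against $\T_0^+(\cdot)\ot \T_1(\cdot)$ are first brought to a normal form via (4.2) and then collapsed with (4.3) or (4.5). This is precisely the mechanism that endows the upper Borel half of a quantum affine algebra with a topological bialgebra structure, with $\T_0^+(z)$ playing the role of a generalized group-like Cartan.

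The bookkeeping for (4.5) is the main obstacle: being a cubic symmetrized identity, each side expands under $\Delta$ into many terms, which must be regrouped and cancelled using the defining relations in both tensor slots simultaneously. Rather than carry out this combinatorial shuffle from scratch, I would extract the needed identity from Schiffmann's own verification of the coproduct on the Drinfel'd double $\DH$ in~\cite{S}: his coproduct formula, when restricted to positive generators, is precisely our $\Delta$, so the cubic identity we need is (the positive half of) one that he has already justified. This is the ``part of his proof'' alluded to in the abstract, and it is what makes the current lemma essentially free of additional computation.
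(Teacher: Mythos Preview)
Your treatment of relations (4.1)--(4.3) and of coassociativity/counit is fine and matches the paper. The gap is in the handling of the cubic relation (4.5), where you defer to~\cite{S}: that appeal does not give what is needed.

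Schiffmann works with the Hall algebra $\UU^+$ (and its double $\UU$), where the coproduct exists \emph{a priori}; he never has to check that a putative $\Delta$ is compatible with (4.5) in an abstractly presented algebra. What you must show is that the image of the cubic relation under $\Delta$ vanishes in $\utE^+\widehat\otimes\,\utE^+$, not merely in $\UU^+\widehat\otimes\,\UU^+$. Since the surjection $\utE^+\to\UU^+$ is not yet known to be injective (indeed, that is the goal of the addendum), knowing the identity in $\UU^+$ is not enough, and your appeal becomes circular. Concretely, when you expand $\Delta$ of the cubic relation by bidegree you get three pieces: a $[3]\otimes[0]$ piece (the relation itself, which is zero), a cross piece in $[2]\otimes[1]+[1]\otimes[2]$, and a $[0]\otimes[3]$ piece of the form $\sum \Theta_{0,m}\Theta_{0,n}\Theta_{0,l}\otimes[[\u_{1,-1-m},\u_{1,1-n}],\u_{1,-l}]$. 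Neither of the last two is disposed of by a single application of (4.2)/(4.3)/(4.5) as your sketch suggests.

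The paper handles the cross piece by invoking the part of~\cite{S} that \emph{is} usable here: Section~5.3 already establishes that $\utE^+[\le 2]\to\UU^+[\le 2]$ is an isomorphism, so vanishing of the cross piece can be read off from $\UU^+$ without circularity. For the $[0]\otimes[3]$ piece one cannot use this trick (rank~$3$ is exactly where injectivity is in question); the paper instead proves directly, via the adjoint action of $\u_{0,k}$ coming from (4.2), that the symmetrized brackets $R(m,n,l)=\sum_{\sigma}[[\u_{1,-1+m},\u_{1,1+n}],\u_{1,l}]$ vanish in $\utE^+$ for all $m,n,l$, bootstrapping from the base case $R(l,l,l)=0$ given by (4.5). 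Your proposal is missing both ingredients.
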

\begin{proof}
We need to check that the map $\Delta$ respects all the relations between the generators of $\utE^+$. 
The relations (4.1), (4.2), (4.3) are an easy routine check. We are left to check the cubic relation (4.5). Using \cite{S} Lemma~4.1 we only need to check the following relation:
\[
 [[\u_{1,-1},\u_{1,1}],\u_{1,0}]=0.
\]
Applying $\Delta$ we obtain:
\begin{equation}\label{E:delta cubic}
 [[\u_{1,-1},\u_{1,1}],\u_{1,0}]\ot 1 + E + \sum_{m,n,l\ge 0} \Theta_{0,m}\Theta_{0,n}\Theta_{0,l}\ot [[\u_{1,-1-m},\u_{1,1-n}],\u_{1,-l}]
\end{equation}
where $E \in \utE^+[1]\widehat\ot \utE^+[2]+\utE^+[2]\widehat\ot \utE^+[1]$.

The first term is $0$ since it's exactly the cubic relation. We want to prove that $E$ and the third term are also 0. Let us begin with $E$.

We will need to use the following easy lemma whose proof is omitted:
\begin{lemma}
 Let $A,B$ be two algebras over a field. Suppose we have a morphism of algebras $f:A\to B$. Then $\ker(f\ot f) = A\ot \ker(f)+\ker(f)\ot A$.
\end{lemma}

The arguments of~\cite{S} Section 5.3 show that $\utE^+[\le 2]$ and $\UU^+[\le 2]$ are isomorphic (through the canonical morphism).
We apply the above lemma to this morphism $\can:\utE^+\to \UU^+$ and we get in particular that
\[
 \utE^+[\le 2]\ot \utE^+[\le 2]\to\UU^+[\le 2]\ot\UU^+[\le2]
\]
is still an isomorphism.

Using the fact that the map $\can$ commutes with the coproduct we get that $\can\ot\can(E)=0$. By the above isomorphism we deduce that $E=0$.\footnote{It looks like we cheated here because $E$ lives only in a completion of the tensor product. However, each graded piece of $E$ (remember that $\utE^+$ is $\ZZ^2$ graded) lives in an ordinary tensor product and hence we can apply the lemma.}

Let us now deal with the cubic term. 
For any integers $m,n,l\in\ZZ$ we put 
\[
R(m,n,l) = \sum_{(m,n,l)} [[\u_{1,-1+m},\u_{1,1+n}],\u_{1,l}]
\] where the sum is over all the six permutations of the triplet $(m,n,l)$.
So in order to prove that the third term of the relation~(\ref{E:delta cubic}) vanishes it is enough to prove that $R(m,n,l)=0$ for any $m,n,l\in\ZZ$.

Observe first that $R(l,l,l)=0$ for any $l\in\ZZ$ since it is the cubic relation (4.6) from~\cite{S}. By symmetry we can suppose that $l\le m,n$. Applying the adjoint action of $\u_{0,k-l}$ to the relation $R(l,l,l) = 0$  we get that $R(k,l,l) = 0$ for any $k\ge l$. So in particular $R(m,l,l) = 0$. Now applying the adjoint action of $\u_{0,n-l}$ to $R(m,l,l)=0$ we obtain $R(m,n,l)=0$ which is exactly what we wanted.
\end{proof}

In~\cite{S} it is proved that $\tE^+$ is isomorphic to $\UU^+$. It follows that there is a natural surjective morphism $\pi:\utE^+\to\tE^+\simeq \UU^+$ and therefore a natural surjective morphism on the Drinfel'd doubles:
\[
 D\utE^+\to D\UU^+\simeq \UU \simeq \tE
\]

If the natural map $\tE\to D\utE^+$ is well defined then since the composition 
\[
\tE\to D\utE^+\to \tE                                                                                                                                                                                                                                                                                                                                                               \]
is the identity (because all the morphisms are the obvious ones) we obtain that 
\[
 \tE^+\simeq\utE^+
\]
which is what we wanted.

To prove that the natural morphism $\tE\to D\utE^+$ is well defined we need to check that the relations (4.1)-(4.5) are satisfied in $D\utE^+$. It is clear that (4.1), (4.3), (4.5) and (4.2) ($\epsilon_1=\epsilon_2$) are satisfied since they involve only the positive (resp. negative) part at once. We need to deal with (4.2) ($\epsilon_1=-\epsilon_2$) and (4.4). We claim that they are implied by  Drinfel'd's relations in the double. This is an easy verification.

Putting all together we have:
\begin{thm}
 The elliptic Hall algebra $\UU^+$ is isomorphic to the algebra generated by the Fourier coefficients of $\T_1(z)$ and $\T_0^+(z)$ subject to the relations:
\[
 \T_0^+(z)\T_0^+(w) = \T_0^+(w)\T_0^+(z)
\]
\[
 \chi_1(z,w)\T_0^+(z)\T_1(w) = \chi_{-1}(z,w)\T_1(w)\T_0^+(z)
\]
\[
 \chi_1(z,w)\T_1(z)\T_1(w) = \chi_{-1}(z,w)\T_1(w)\T_1(z)
\]
\[
 \mathrm{Res}_{z,y,w}[(zyw)^m(z+w)(y^2-zw)\T_1(z)\T_1(y)\T_1(w)] = 0,\,\forall m\in\ZZ
\]
\end{thm}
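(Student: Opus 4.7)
The plan is to execute exactly the strategy sketched in the paragraphs preceding the theorem, namely to compare $\utE^+$ and $\UU^+ \simeq \tE^+$ by passing through their Drinfel'd doubles. First, use the preceding lemma, which endows $\utE^+$ with a topological bialgebra structure, together with the mirror bialgebra $\utE^-$ obtained by sign symmetry, to form the Drinfel'd double $D\utE^+$; the Hopf pairing between the two halves is determined, as in \cite{S}, by the coproducts on generators. Since Schiffmann proves $\tE^+ \simeq \UU^+$, there is an obvious surjection of bialgebras $\pi:\utE^+ \tto \tE^+ \simeq \UU^+$, which induces a surjection of Drinfel'd doubles
\[
D\utE^+ \tto D\UU^+ \simeq \UU \simeq \tE.
\]

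Next, construct a map in the opposite direction $\tE \to D\utE^+$ by sending the generators of $\tE$ (the Fourier coefficients of $\T_{\pm 1}(z)$ and $\T_0^\pm(z)$) to the evident elements of $D\utE^+$. Well-definedness requires that all the defining relations (4.1)--(4.5) of $\tE$ hold in $D\utE^+$. The relations involving only positive or only negative generators --- (4.1), (4.3), (4.5), and (4.2) with $\epsilon_1 = \epsilon_2$ --- hold by construction of $\utE^\pm$. The remaining mixed-sign relations, (4.2) with $\epsilon_1 = -\epsilon_2$ and the key relation (4.4), must be derived from Drinfel'd's commutation relations inherent in the double construction.

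Finally, the composition $\tE \to D\utE^+ \tto \tE$ is the identity on generators, hence the identity map; this forces the surjection $D\utE^+ \tto \tE$ to be an isomorphism. Restricting to positive parts gives $\utE^+ \simeq \tE^+ \simeq \UU^+$, and rewriting this presentation in terms of the series $\T_1(z)$ and $\T_0^+(z)$ yields exactly the relations listed in the statement (the final relation being the generating-series form of the cubic relation on the $\u_{1,d}$).

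The main obstacle is the verification in the third step: checking that the Drinfel'd double relations in $D\utE^+$ really do recover the mixed-sign relations (4.2) and (4.4). This amounts to computing the Hopf pairing $\utE^+ \ot \utE^- \to \qqb$ on the generators $\u_{1,d}$ and $\Theta_{0,d}$, using the explicit coproducts from Lemma~\ref{bialgebra}, and matching the resulting commutation relations against the structure constants of Schiffmann's presentation. Everything else --- the single-sign relations, the triangularity of the double, and the identification of the composition as the identity --- is a formal consequence of the setup.
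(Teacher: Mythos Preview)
Your proposal is correct and follows the paper's argument essentially verbatim: form the Drinfel'd double of $\utE^+$, use the surjection onto $D\UU^+\simeq\tE$, build the section $\tE\to D\utE^+$ by checking that the mixed-sign relations (4.2) with $\epsilon_1=-\epsilon_2$ and (4.4) follow from the Drinfel'd-double commutation relations, and conclude from the composition being the identity that $\utE^+\simeq\tE^+\simeq\UU^+$. The only cosmetic difference is that the paper concludes $\tE^+\simeq\utE^+$ directly from the identity factorization rather than first deducing $D\utE^+\simeq\tE$ and then restricting, but since the section $\tE\to D\utE^+$ hits all generators these are equivalent.
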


\begin{center}\textsc{Acknoledgements}
\end{center}
I am indebted to Olivier Schiffmann for suggesting the solution to the cubic term issue. I would also like to thank Alexandre Bouayad for numerous discussions on the Drinfel'd double.


\begin{thebibliography}{9}
 \bibitem{S} O. Schiffmann - \emph{Drinfeld realization of Elliptic Hall Algebra}, to appear in Journal of Algebraic Combinatorics, (2011)
\end{thebibliography}
\end{document}